\newtheorem{thm}{Theorem}
\newtheorem{claim}{Claim}
\newtheorem{lemma}{Lemma}
\newtheorem{cor}{Corollary}
\newcommand{\Z}{\mathbb{Z}}
\newcommand{\D}{\mathbb{D}}
\newcommand{\Q}{\mathbb{Q}}
\newcommand{\SL}{\mathrm{SL}}
\newcommand{\C}{\mathbb{C}}
\newcommand{\T}{\mathbb{T}}
\newcommand{\im}{\mathrm{Im}}
\newcommand{\re}{\mathrm{Re}}
\newcommand{\norm}{\vert\vert}
\begin{document}
\title[H\"older continuity for spectral measures of extended CMV]{The H\"older continuity of spectral measures of an extended CMV matrix}
\author[Munger and Ong]{Paul E. Munger and Darren C. Ong}
\begin{abstract}We prove results about the H\"older continuity of the spectral measures of the extended CMV matrix, given power law bounds of the solution of the eigenvalue equation. We thus arrive at a unitary analogue of the results of Damanik, Killip and Lenz about the spectral measure of the discrete Schr\"odinger operator.
\end{abstract}

\thanks{D.O. was supported in part by NSF grant DMS--1067988}
\maketitle
\begin{section}{Introduction}
The CMV matrix is a central object in the study of orthogonal polynomials on the unit circle (OPUC), where it plays a role analogous to that of the Jacobi matrix in the study of orthogonal polynomials on the real line (OPRL). More precisely, given a probability measure on the unit circle, we can perform a Gram-Schmidt orthogonalization process on $\{1,z,z^2,\ldots\}$ using the standard $L^2$ inner product to obtain a sequence of orthogonal polynomials $\{\varphi_0(z), \varphi_1(z),\varphi_2(z),\ldots\}$ that obey a recursion relation known as the Szeg\H o recursion, given by $\varphi_0(z)=1$ and 
\[
\varphi_{n+1}(z)=\frac{z\varphi_n(z)-\overline{\alpha(n)}\varphi_n^{*,n}(z)}{\rho(n)}.
\]
 Here $\{\alpha(n)\}_{n=0}^\infty\in \mathbb D^\infty$ are the recursion coefficients (also known as \em Verblunsky coefficients\em), and $\rho(n)=\sqrt{1-\vert \alpha(n)\vert^2}$. The ${}^{*,n}$ operator is defined by $P^{*,n}(z)=z^n \overline {P(1/\overline{z})}$. 

The Szeg\H o recursion can be written in a matrix form as follows:
\begin{equation}\label{e.tmbasic1}
\begin{pmatrix} \varphi_{n+1}(z) \\ \varphi_{n+1}^*(z) \end{pmatrix} = \frac{1}{\rho(n)} \left( \begin{array}{c
c} z & - \overline{\alpha(n)} \\ - \alpha(n) z & 1 \end{array} \right) \begin{pmatrix} \varphi_{n}(z) \\ \varphi
_{n}^*(z) \end{pmatrix}.
\end{equation} 
 
The (one-sided) CMV matrix $\mathcal C$ is a unitary operator on $\ell^2(\mathbb Z_{\geq 0})$ given by 

\begin{equation}\label{CMVmatrix-1}
\left(
\begin{array}{cccccc}

\overline{\alpha(0)}&\overline{ \alpha(1)}\rho(0)&\rho(1)\rho(0)&0&0&\ldots\\
\rho(0)&- \overline{\alpha(1)}\alpha(0)&-\rho(1)\alpha(0)&0&0&\ldots\\
0& \overline{\alpha(2)}\rho(1)&-\overline{\alpha(2)}\alpha(1)&\overline{\alpha(3)}\rho(2)&\rho(3)\rho(2)&\ldots\\
0& \rho(2)\rho(1)&-\rho(2)\alpha(1)&-\overline{\alpha(3)}\alpha(2)&-\rho(3)\alpha(2)&\ldots\\
0& 0&0&\overline{\alpha(4)}\rho(3)&-\overline{\alpha(4)}\alpha(3)&\ldots\\
\ldots& \ldots&\ldots&\ldots&\ldots&\ldots\\
\end{array}
\right).
\end{equation}

There are numerous important connections between the matrix, the polynomials, and the probability measure on the unit circle. For instance, the spectral measure of $\mathcal C$ is exactly the probability measure from which the orthogonal polynomials are derived. The CMV matrix thus enables us to use spectral theoretic tools in OPUC, and enables us to understand OPUC as a unitary analogue of OPRL. We recommend \cite{Simon} as a good reference for OPUC and the CMV matrix.

Now let us redefine $\{\alpha(n)\}_{n=-\infty}^\infty$ as a two-sided infinite sequence, so we can introduce the two-sided CMV matrix $\mathcal E$,
\begin{equation}\label{CMVmatrix-2}
\left(
\begin{array}{ccccccc}
\ldots&\ldots& \ldots&\ldots&\ldots&\ldots&\ldots\\
\ldots&-\overline{\alpha(0)}\alpha(-1)&\overline{ \alpha(1)}\rho(0)&\rho(1)\rho(0)&0&0&\ldots\\
\ldots&-\rho(0)\alpha(-1)&- \overline{\alpha(1)}\alpha(0)&-\rho(1)\alpha(0)&0&0&\ldots\\
\ldots&0& \overline{\alpha(2)}\rho(1)&-\overline{\alpha(2)}\alpha(1)&\overline{\alpha(3)}\rho(2)&\rho(3)\rho(2)&\ldots\\
\ldots&0& \rho(2)\rho(1)&-\rho(2)\alpha(1)&-\overline{\alpha(3)}\alpha(2)&-\rho(3)\alpha(2)&\ldots\\
\ldots&0& 0&0&\overline{\alpha(4)}\rho(3)&-\overline{\alpha(4)}\alpha(3)&\ldots\\
\ldots&\ldots& \ldots&\ldots&\ldots&\ldots&\ldots\\
\end{array}
\right).
\end{equation}
Here $\mathcal E$ is a unitary operator on $\ell^2(\mathbb Z)$. While $\mathcal E$ has a somewhat looser connection with the orthogonal polynomials since the set of recursion coefficients is now two-sided infinite, it is the more natural object of study when we choose to define the $\alpha(n)$ dynamically (please refer to sections 10.5-10.16 of \cite{Simon} for a background on this point of view). 

Furthermore, the two-sided CMV matrix has emerged as a useful tool in the study of quantum walks, a connection first explored in \cite{CMVG}. For certain models of quantum walks, where $\psi\in\ell^2(\Z)$ refers to the initial state of a particle, $\mathcal E^k\psi$ gives us the state after a quantum walk of $k$ steps. 

There is, naturally, a way to relate the measures of an extended CMV matrix and the measures of the constituent halves. This formula was discovered by Gesztesy and Zinchenko, and will be explained in the next section. As an application of this formula we present an OPUC analogue of \cite{DKL}. That is, we derive theorems about the H\"older continuity of the spectral measures of $\mathcal E$ given power law bounds on the entries of formal eigenvectors of one of the corresponding $\mathcal C$ ``halves".

In \cite{IRT}, \cite{DL}, and \cite{DKL} power law bounds on formal eigenvectors are established for OPRL with a Sturmian sequence of recursion coefficients.  By \emph{Sturmian}, we mean that $\alpha(n) = v(n)\alpha + (1-v(n))\beta$, where $\alpha$, $\beta$ (the \emph{alphabet}) are complex numbers of modulus less than one, and $v(n) = [(n+1)\omega]-[n\omega]$. Here $\omega \in (0,1) \setminus \Q$ is the \emph{frequency}. In Section 4, we apply analogous methods to extended CMV matrices with $\omega$ equal to the golden mean.

This is the second of a trilogy of loosely related papers concerned with the dynamical spreading of the time-independent quantum walk model whose coins arise from a Fibonacci sequence. More precisely, the coin at the $n$th site is rotation by $\theta_n$, where $\theta$ is any suffix of a Fibonacci sequence (see \cite{Simon} 12.8):
$$\dots \theta_a, \theta_b, \theta_a, \theta_a, \theta_b, \dots. $$

The first of the trilogy is \cite{DMY}, which establishes that the formal solutions $u$ of $\mathcal Eu=zu$ obey certain power law bounds (where $\mathcal{E}$ is the transition matrix that describes the quantum walk above). This paper shows that $u$ obeying power law bounds implies H\"older continuity of spectral measures of $\mathcal E$. Lastly, \cite{DFV} asserts that for unitary operators, H\"older continuity of spectral measure implies dynamical spreading of the corresponding quantum walk.

We wish here to thank David Damanik and Fritz Gesztesy for many helpful suggestions and comments.
\end{section}

\begin{section}{Main tools and background}
In this section, we will explain an essential formula proven in \cite{GZ}. We require a way to relate the extended CMV operator $\mathcal E$ with the two one-sided CMV operators that comprise its two halves. More precisely, if we modify $\alpha(-1)=-1$, then (\ref{CMVmatrix-2}) becomes the direct sum of operators on $\ell^2([0,\infty)\cap\Z)$ and $\ell^2([-1,-\infty)\cap\Z)$ of the form (\ref{CMVmatrix-1}). We label the halves as $\mathcal C_+$ and $\mathcal C_-$ respectively. We also note that henceforth in this paper $\mathcal E$ refers to the unmodified extended CMV matrix.

First, let us label $F_+(z)$, the Carath\'eodory function corresponding to $\mathcal C_+$, and $F_-(z)$, the Carath\'eodory function corresponding to $\mathcal C_-$. Carath\'eodory functions are holomorphic maps from $\mathbb D$ to the right half plane $\{z\vert \mathrm{Re   }\hspace{5pt}  z>0\}$.  We also say a function is anti-Carath\'eodory when its negative is Carath\'eodory. The correspondences between a given CMV matrix and its Carath\'eodory function are explored more fully in Section 1.3 of \cite{Simon}. Briefly, a Carath\'eodory function is the CMV analogue of the $m$-function in the theory of Jacobi matrices, and is connected to the spectral theory of the CMV matrix.

For example, where $c_i$ are the moments of the spectral measure of the one-sided CMV matrix $\mathcal C$, its Carath\'eodory function $F$ may be expressed as $F(z)=1+2\sum_{n=1}^\infty c_n z^n$. It is also true that $\mathrm Re F(re^{i\theta}) d\theta/2\pi$ converges weakly to the spectral measure of $\mathcal C$ as $r\to 1$. Finally, we note that where $\mu$ is the spectral measure of a CMV matrix, its Carath\'eodory function is given by the formula 
\[F(z)=\int \frac{e^{i\theta}+z}{e^{i\theta}-z}d\mu(\theta).\]

The Green's function (or resolvent function) for $\mathcal{E}$ is computed using formal eigenvalues to $\mathcal{C}_\pm$ and $\mathcal{C}^T_\pm$. 
\begin{lemma}[Lemma 3.1 in \cite{GZ}]

Let $z\in \mathbb C\setminus(\partial \D\cup\{0\})$, and let $M_-$ be an anti-Caratheodory function in (\cite{GZ}, Lemma 2.20), which is, by (2.139) in \cite{GZ} related to $F_-$ by
$$M_-(z) = \frac{\re(1-\overline\alpha_0) - i \im(1+\overline\alpha_0)F_-(z)}{i\im(1-\overline\alpha_0) - \re(1+\overline\alpha_0)F_-(z)}.$$
 Let $u_{\pm}$ be $\ell^2$ solutions to $(\mathcal C_\pm-z)u=0$, and let $v_{\pm}$ be $\ell^2$ solutions to $(\mathcal C^T_\pm-z)v=0$, normalized by
$$v_-(z,0) = -1 + M_-(z),\ v_+(z,0) = -1+F_+(z),$$
$$u_-(z,0) = z+zM_-(z),\ u_+(z,0) = z+zF_+(z).$$
We may extend these solutions to solutions of $(\mathcal E-z)w=0$ and $({\mathcal E}^T-z)w=0$.

Then the resolvent function $(\mathcal E-z)^{-1}(x,y)$ can be expressed as

\begin{equation}\label{GZ}
\frac{-1}{2z(F_+(z)-M_-(z))}
\begin{cases}
u_-(z,x)v_+(z,y) &\text{ if $x<y$ or $x=y$ odd, }\\
u_+(z,x)v_-(z,y) &\text{ if $x>y$ or $x=y$ even, }
\end{cases}
\end{equation}
\end{lemma}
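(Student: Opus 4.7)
The plan is to verify directly that the piecewise formula on the right-hand side of (\ref{GZ}), which I will denote $G(x, y)$, coincides with the kernel of $(\mathcal E - z)^{-1}$. Since $\mathcal E$ is unitary and $z \notin \partial \D \cup \{0\}$, the resolvent exists and is uniquely determined by two conditions: for each fixed $y$, $G(\cdot, y) \in \ell^2(\Z)$, and $(\mathcal E - z) G(\cdot, y) = \delta_y$. Proving these two properties is the whole task.

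The $\ell^2$ property is essentially immediate from the construction. For $x > y$ the formula is proportional to $u_+(x)$, the extension to $\Z$ of the $\ell^2$ solution of $(\mathcal C_+ - z) u = 0$, which decays at $+\infty$; for $x < y$ it is proportional to $u_-(x)$, which decays at $-\infty$. Since only finitely many indices lie at or near $y$, $G(\cdot, y)$ is globally square-summable. For the eigenfunction equation, away from a neighborhood of $x = y$ the function $G(\cdot, y)$ is a scalar multiple of an extended solution of $(\mathcal E - z) w = 0$, so $[(\mathcal E - z) G(\cdot, y)](x) = 0$ there.

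The bulk of the work is localized at the ``switch'' at $x = y$. Because $\mathcal E$ is pentadiagonal with different support patterns on even and odd rows (visible in (\ref{CMVmatrix-2})), the handoff between the $u_- v_+$ branch and the $u_+ v_-$ branch must be performed differently depending on the parity of $x = y$; this is what forces the case split in the statement. In each parity case, expanding $(\mathcal E - z) G(\cdot, y)$ at the indices $x \in \{y-1, y, y+1\}$ (or at most a bounded window around $y$) and collecting terms should collapse to a single bilinear expression in $u_\pm$ and $v_\mp$ evaluated at two adjacent indices, which plays the role of a discrete Wronskian.

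The main obstacle, as I see it, is verifying that this Wronskian-like combination equals (up to sign) $2z(F_+(z) - M_-(z))$, so that the explicit prefactor in (\ref{GZ}) delivers the correct $\delta_y$. Because $u_\pm$ solves $(\mathcal E - z) w = 0$ and $v_\mp$ solves $(\mathcal E^T - z) w = 0$, a discrete Lagrange identity should make the relevant bilinear combination independent of the index at which it is evaluated; the parity split in the statement is then precisely what is needed so that the same constant appears on both sides. I would evaluate the Wronskian at the natural origin $x = 0$ using the explicit normalizations
\begin{align*}
v_\pm(z, 0) &= -1 + \bigl\{M_-(z),\, F_+(z)\bigr\}, \\
u_\pm(z, 0) &= z + z \bigl\{M_-(z),\, F_+(z)\bigr\},
\end{align*}
reducing the identification to a purely algebraic computation in $F_+$ and $M_-$. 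The relation between $M_-$ and $F_-$ given in the statement serves only to exhibit $M_-$ as an anti-Carath\'eodory function and does not enter the Wronskian evaluation directly. Once this constant is identified, uniqueness of the resolvent kernel in $\ell^2(\Z)$ closes the argument.
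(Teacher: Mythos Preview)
The paper does not prove this lemma at all: it is quoted verbatim as Lemma~3.1 of Gesztesy--Zinchenko \cite{GZ}, and the authors simply refer the reader there. So there is no ``paper's own proof'' to compare your attempt against.

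That said, your outline is the standard route to such a resolvent identity and is correct in structure: verify $\ell^2$ decay of $G(\cdot,y)$ from the Weyl solutions, verify the homogeneous equation away from the diagonal, and then check that the jump at the switch produces exactly $\delta_y$ via a constant Wronskian. Two remarks. First, the Wronskian computation cannot be completed using only the values $u_\pm(z,0)$ and $v_\pm(z,0)$ that appear in the statement; you also need the values at an adjacent site. These are precisely the entries $u_\pm(1)$, $v_\pm(1)$ from the table on p.~181 of \cite{GZ} that the present paper invokes later in the proof of Theorem~1, so you should either import that table or redo the one-step Szeg\H{o}/CMV recursion to get them. Second, the parity split is not optional bookkeeping: the $\mathcal L\mathcal M$ block structure of $\mathcal E$ means the row supported at an odd $x$ and the row supported at an even $x$ couple to different neighbors, so the Lagrange identity and the jump computation genuinely differ in the two cases. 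Your plan acknowledges this, but be aware that carrying it out cleanly is where most of the algebra lives, and it is exactly what \cite{GZ} does in detail.
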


\end{section}
\begin{section}{Applications of the formula}
For a $\ell^2(\mathbb N)$-vector $u$, and a positive integer $n$, we define $\norm u\norm_n$ as $\sqrt{\sum_{j=0}^n \vert u(j)\vert^2}$. We can also define $\norm u\norm_k$, for $k$ positive but not an integer as a linear interpolation of $\norm u\norm_n$.

\begin{lemma}\label{JLI}
Suppose, for a one-sided CMV matrix $\mathcal  C$, that every solution of 
\begin{equation}\label{e.tmbasic}
\begin{pmatrix} \eta_{n+2}(z) \\ \eta_{n+1}(z) \end{pmatrix} = \frac{1}{\rho(n)} \left( \begin{array}{c
c} z & - \overline{\alpha(n)} \\ - \alpha(n) z & 1 \end{array} \right)\begin{pmatrix} \eta_{n}(z) \\ \eta_{n-1}(z) \end{pmatrix},
\end{equation}
with $\vert \eta_0(z)\vert^2+\vert \eta_1(z)\vert^2=2$ obeys the estimate
\begin{equation}\label{estimate}C_1L^{\gamma_1}\leq \norm \eta(z)\norm _L\leq C_2 L^{\gamma_2},
\end{equation}
for $L>0$ sufficiently large. Then
\begin{equation}\label{boundarycondition}
\sup_{\lambda\in \partial\D} \left\vert \frac{(1-\lambda)+(1+\lambda)F(rz)}{(1+\lambda)+(1-\lambda)F(rz)}\right\vert\leq C_3(1-r)^{\beta-1},
\end{equation}
where $\beta=2\gamma_1/(\gamma_1+\gamma_2)$.
\end{lemma}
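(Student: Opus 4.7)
The lemma is a CMV/OPUC version of the Jitomirskaya--Last inequality employed in \cite{DKL}, and I would proceed by the same architecture, adapted to the unitary setting. First, writing $\lambda = e^{i\phi}$ and $t = \tan(\phi/2) \in \R \cup \{\infty\}$, the identity $(1-\lambda)/(1+\lambda) = -it$ converts the M\"obius transform to
\[
\frac{(1-\lambda)+(1+\lambda)F(rz)}{(1+\lambda)+(1-\lambda)F(rz)} = \frac{t + iF(rz)}{i + tF(rz)},
\]
a one-parameter family of M\"obius images of $F(rz)$ tracing a circle that passes through $F(rz)$ at $t = 0$ and $1/F(rz)$ at $t = \infty$. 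This family coincides with the Carath\'eodory functions of the CMV matrices obtained from $\mathcal C$ by modifying the ``boundary'' Verblunsky coefficient by the unimodular phase $\lambda$, so the supremum is the maximum modulus on the boundary of the associated Weyl disk at spectral parameter $rz$.

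Next, using the Wronskian/Weyl-disk formalism for OPUC (see \cite{Simon}, Sections~3.2 and 10.8), I would establish a CMV analogue of the Jitomirskaya--Last estimate. For any two linearly independent solutions $u, v$ of (\ref{e.tmbasic}) at $z \in \partial\D$ normalized so that their Wronskian (defined via the CMV $*$-involution on polynomials) is unity, and for any positive cutoff $L$, the target bound has the shape
\[
\left|\frac{(1-\lambda)+(1+\lambda)F(rz)}{(1+\lambda)+(1-\lambda)F(rz)}\right| \leq C\left(\frac{\norm u\norm_L}{\norm v\norm_L} + \norm u\norm_L\,\norm v\norm_L\,(1-r)\right),
\]
uniformly in $\lambda \in \partial\D$. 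The first term captures the position of the Weyl disk at $rz$, the second its radius. The radius bound is the technical heart: it follows from the Wronskian identity together with the observation that the transfer matrix at $rz$ differs from its counterpart at $z$ by an $r^L$-type damping after $L$ steps, producing a geometric factor that accumulates to $(1-r)\norm u\norm_L\norm v\norm_L$.

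Finally, I would optimize $L$. The hypothesis applies to every normalized solution, so both $\norm u\norm_L$ and $\norm v\norm_L$ lie in $[C_1 L^{\gamma_1}, C_2 L^{\gamma_2}]$. Choosing $L = L(r)$ to balance the two terms---equivalently, $L^{\gamma_1+\gamma_2}(1-r) \asymp 1$, i.e.\ $L \asymp (1-r)^{-1/(\gamma_1+\gamma_2)}$---yields
\[
\frac{\norm u\norm_L}{\norm v\norm_L} + \norm u\norm_L\norm v\norm_L (1-r) \lesssim L^{\gamma_2-\gamma_1} \asymp (1-r)^{-(\gamma_2-\gamma_1)/(\gamma_1+\gamma_2)} = (1-r)^{\beta-1},
\]
since $\beta - 1 = -(\gamma_2-\gamma_1)/(\gamma_1+\gamma_2)$.

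The principal obstacle is the middle step, namely producing the Weyl-disk estimate with the correct $(1-r)$ scaling in the CMV setting. Unlike the OPRL case, the Szeg\H o transfer matrices do not preserve a standard symplectic form, so the Wronskian must be defined through the CMV $*$-involution, and one must track both the position and the radius of the Weyl disk carefully when comparing evaluations at $z$ and at $rz$. Uniformity over $\lambda \in \partial\D$ then follows because the hypothesis controls every normalized solution simultaneously, hence controls the ratio $\norm u\norm_L/\norm v\norm_L$ for any pair arising from a given boundary phase.
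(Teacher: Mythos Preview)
Your outline is sound and arrives at the right conclusion, but it does considerably more work than the paper does, because the ``principal obstacle'' you flag has already been cleared in the literature. The paper simply invokes the Jitomirskaya--Last inequality for OPUC as stated in \cite{Simon}, Section~10.8: for each $\lambda\in\partial\D$ one has
\[
|F^\lambda(rz)|\asymp \frac{\norm\psi^\lambda(z)\norm_{x(r)}}{\norm\varphi^\lambda(z)\norm_{x(r)}},
\]
where $F^\lambda$ is precisely the M\"obius transform you identified (the Carath\'eodory function of the Aleksandrov measure $\mu_\lambda$), $\varphi^\lambda,\psi^\lambda$ are the first- and second-kind polynomials for that measure, and the scale $x(r)$ is defined implicitly by $(1-r)\,\norm\varphi^\lambda\norm_{x(r)}\norm\psi^\lambda\norm_{x(r)}=\sqrt2$. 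Thus there is no need to redevelop the Weyl-disk machinery or the CMV Wronskian from scratch.

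The remaining step in the paper is pure algebra with exponents: since $\varphi^\lambda,\psi^\lambda$ satisfy \eqref{e.tmbasic} with the correct normalization, the hypothesis gives $C_1x(r)^{\gamma_1}\le\norm\varphi^\lambda\norm_{x(r)},\norm\psi^\lambda\norm_{x(r)}\le C_2x(r)^{\gamma_2}$, and one checks $\gamma_1(\beta-2)+\gamma_2\beta=0$ so that
\[
\norm\psi^\lambda\norm_{x(r)}^{\beta}\norm\varphi^\lambda\norm_{x(r)}^{\beta-2}\lesssim 1,
\]
which, after multiplying and dividing by $\norm\psi^\lambda\norm_{x(r)}^{\beta-1}\norm\varphi^\lambda\norm_{x(r)}^{\beta-1}$ and using the defining relation for $x(r)$, rearranges to $|F^\lambda(rz)|\lesssim(1-r)^{\beta-1}$. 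This is exactly your optimization step in disguise: the implicit choice of $x(r)$ is the balanced scale $L\asymp(1-r)^{-1/(\gamma_1+\gamma_2)}$ you arrive at by hand. So the two arguments are equivalent; the paper's is shorter only because it quotes the OPUC Jitomirskaya--Last inequality ready-made rather than rederiving a two-term Weyl-disk bound and then optimizing.
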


\begin{proof}
This is a consequence of the Jitomirskaya-Last inequality for OPUC (see \cite{Simon} Section 10.8), which says that 
$$\frac{\norm\psi^\lambda(z)\norm_{x(r)}}{\norm\varphi^\lambda(z)\norm_{x(r)}} \lesssim |F^\lambda(rz)| \lesssim  \frac{\norm\psi^\lambda(z)\norm_{x(r)}}{\norm\varphi^\lambda(z)\norm_{x(r)}}.$$ 
Here, $F^\lambda$ is the Carath\'eodory function corresponding to the Alexandrov measure $\mu_\lambda$ (refer to Theorem 3.2.14 of \cite{Simon}). Its first and second kind orthogonal polynomials are $\varphi^\lambda$ and $\psi^\lambda$. The function $x(r)$ is defined by $(1-r)\norm\varphi^\lambda(z)\norm_{x(r)} \norm\psi^\lambda(z)\norm_{x(r)} = \sqrt{2}$. 

The required inequality is equivalent to $|F^\lambda(rz)| \lesssim (1-r)^{\beta -1}$. This is true if $\frac{\norm\psi^\lambda(z)\norm_{x(r)}}{\norm\varphi^\lambda(z)\norm_{x(r)}} \lesssim (1-r)^{\beta - 1}$, by the Jitomirskaya-Last inequality. Because $\varphi^\lambda$ and $\psi^\lambda$ solve $(\eta_{n+2}(z), \eta_{n+1}(z))^T = T_n(z) (\eta_n(z), \eta_{n-1}(z))^T$ with initial conditions $(1,\overline\lambda)$ and $(1, -\overline\lambda)$, the hypothesis applies to $\psi^\lambda$ and $\varphi^\lambda$. Therefore, \[\norm\psi^\lambda(z)\norm_{x(r)}^\beta \norm\varphi^\lambda(z)\norm_{x(r)}^{\beta - 2} \lesssim x(r)^{\gamma_1 (\beta - 2) + \gamma_2 \beta} \simeq 1.\] By the definition of $x(r)$, it follows that \[\norm\psi^\lambda(z)\norm_{x(r)} ^\beta \norm\varphi^\lambda(z)\norm_{x(r)} ^{\beta-2} \lesssim (1-r)^{\beta - 1} \norm\psi^\lambda(z)\norm_{x(r)} ^{\beta-1} \norm\varphi^\lambda(z)\norm_{x(r)} ^{\beta-1},\] which is equivalent to the required inequality. 
\end{proof}

\begin{thm}Given $z \in\Sigma$, suppose that the estimate (\ref{estimate}) holds.  Then, where $G_{kl}(z)=(\delta_k, (\mathcal E-z)^{-1}\delta_l)$, 

\[ \vert G_{00}(rz)+G_{11}(rz)\vert \leq C_4 (1-r)^{\beta-1},\]
for all $r\in (0.9,1)$ and $C_4$ a $z$ and $r$-independent constant. Consequently, $\Lambda$ is $\beta$-H\"older continuous at $z$.

In particular, assume that $S \subset \partial \D$ is a Borel set such that there are constants $\gamma_1,\gamma_2$ and, for each $z \in S$, there are constants $C_1(z), C_2(z)$ so that
$$
C_1(z) L^{\gamma_1} \leq \norm\eta\norm_L \leq C_2(z) L^{\gamma_2}
$$
for every $z \in S$ and for every solution of \eqref{e.tmbasic} that is normalized. Then, the restriction of every spectral measure of $\mathcal{E}$ to $S$ 
is purely $\frac{2\gamma_1}{\gamma_1 + \gamma_2}$-continuous, that is, it gives zero weight to sets of zero $h^{\frac{2\gamma_1}{\gamma_1 + \gamma_2}}$ measure.
 
\end{thm}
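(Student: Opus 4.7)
The plan is threefold. First, I would use the Gesztesy--Zinchenko formula (\ref{GZ}) at $(x,y)=(0,0)$ (the even case, giving $u_+(z,0)v_-(z,0)$) and at $(x,y)=(1,1)$ (the odd case, giving $u_-(z,1)v_+(z,1)$) to express $G_{00}(rz)+G_{11}(rz)$ as a rational function of $F_+(rz)$ and $M_-(rz)$ whose denominator is $F_+(rz)-M_-(rz)$. Second, I would rewrite that rational function as a finite sum of Möbius transforms of $F_+(rz)$ and of $M_-(rz)$ of exactly the form appearing inside the supremum in (\ref{boundarycondition}) of Lemma \ref{JLI}, and apply that lemma to each half $\mathcal{C}_\pm$ of $\mathcal E$ to conclude $|G_{00}(rz)+G_{11}(rz)| \leq C_4 (1-r)^{\beta-1}$. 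Third, I would invoke the standard unitary analogue of the Damanik--Killip--Lenz criterion, by which such a $(1-r)^{\beta-1}$ boundary growth bound on a diagonal resolvent matrix element implies $\beta$-H\"older continuity of the associated spectral measure $\Lambda$ at $z$.

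In carrying out the first two steps I would use the GZ normalizations $u_+(z,0)=z(1+F_+(z))$, $v_-(z,0)=M_-(z)-1$, $u_-(z,0)=z(1+M_-(z))$, $v_+(z,0)=F_+(z)-1$ at site $0$ directly; the values at site $1$ are obtained by applying the CMV transfer matrix (\ref{e.tmbasic1}) at the seam between the two halves and are again affine in $F_+$ and $M_-$ with coefficients depending on $\alpha(-1),\alpha(0),\rho(-1),\rho(0)$ and $z$. After a tedious but straightforward simplification, the common denominator $F_+(rz)-M_-(rz)$ factors against the numerator so as to express $G_{00}(rz)+G_{11}(rz)$ as a finite sum of terms of Möbius type in $F_+(rz)$ or in $M_-(rz)$.

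The crucial observation for the bounding step is that the supremum in (\ref{boundarycondition}) ranges over all $\lambda\in\partial\D$, hence dominates the modulus of any Möbius image of $F_+(rz)$ in the relevant family; because the Gesztesy--Zinchenko lemma exhibits $M_-(z)$ itself as a Möbius transform of $F_-(z)$, the same supremum, this time applied to $\mathcal C_-$, controls $M_-(rz)$ and its further Möbius transforms. The main obstacle I anticipate is precisely the algebraic bookkeeping in this simplification: one must verify that every summand produced is of the Möbius form required for Lemma \ref{JLI} to apply directly. For the ``in particular'' conclusion, the estimate on $|G_{00}(rz)+G_{11}(rz)|$ is then pointwise in $z\in S$ (with $z$-dependent constants $C_1(z),C_2(z)$), and a standard covering argument using $h^\beta$-Hausdorff measure, together with the fact that $\{\delta_0,\delta_1\}$ forms a cyclic set for the extended CMV operator $\mathcal E$, extends the pure $\beta$-continuity to every spectral measure of $\mathcal E$ restricted to $S$.
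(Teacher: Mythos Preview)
Your proposal has a genuine gap: the theorem assumes the power-law estimate \eqref{estimate} only for the Szeg\H{o} recursion of the \emph{right} half $\mathcal C_+$, yet your plan requires applying Lemma~\ref{JLI} to $\mathcal C_-$ as well. Nothing in the hypotheses gives you bounds on solutions associated to the left half, so that step is unjustified. Relatedly, your claimed decomposition of $G_{00}+G_{11}$ into a finite sum of M\"obius transforms with parameter $\lambda\in\partial\D$ does not work: after using \eqref{GZ} one finds
\[
G_{00}+G_{11}=-\frac{(-1+F_+)(1+M_-)}{2(F_+-M_-)}-\frac{[z+\overline{\alpha(0)}+M_-(z-\overline{\alpha(0)})][-1-\alpha(0)z+F_+(1-\alpha(0)z)]}{2\rho(0)^2z(F_+-M_-)},
\]
and while each term is indeed M\"obius in $F_+$, the parameter $\lambda$ read off from the coefficients depends on $M_-(rz)$ and lies in the \emph{open} disk, not on $\partial\D$.

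The paper's key idea, which you are missing, is a maximum modulus argument: for fixed $rz$, the expression inside the supremum in \eqref{boundarycondition} is analytic in $\lambda$ on $\overline{\D}$ (the pole sits outside since $F_+$ is Carath\'eodory), so the bound extends from $\partial\D$ to all of $\overline{\D}$. Because $-M_-$ is Carath\'eodory, $\lambda=(M_-(rz)+1)/(M_-(rz)-1)\in\D$, and substituting this value yields the bound on $\bigl|\tfrac{1-M_-F_+}{F_+-M_-}\bigr|$, which in turn controls $|G_{00}+G_{11}|$. Thus the left half enters only through the general mapping property of anti-Carath\'eodory functions, and Lemma~\ref{JLI} is applied solely to $\mathcal C_+$. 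Your third step (H\"older continuity from the resolvent bound, cyclicity of $\{\delta_0,\delta_1\}$) is fine.
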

\begin{proof}
The following is a maximum modulus principle argument similar to that in \cite{DKL}. Fix $z\in \Sigma$ and $r\in (0.9,1)$. We consider  (\ref{boundarycondition}) and obtain 
\begin{equation}
\sup_{\lambda\in \partial\D} \left\vert \frac{(1-\lambda)+(1+\lambda)F_+(rz)}{(1+\lambda)+(1-\lambda)F_+(rz)}\right\vert\leq C_3(1-r)^{\beta-1}.
\end{equation}
Since $-M_-(z)$ is a Carath\'eodory function it maps to the right half plane, and so the expression $(M_-(rz)+1)/(M_-(rz)-1)$ has modulus less than $1$. Thus by the maximum modulus principle,  
\[
\left\vert \frac{\left(1-\frac{M_-(rz)+1}{M_-(rz)-1}\right)+\left(1+\frac{M_-(rz)+1}{M_-(rz)-1}\right)F_+(rz)}{\left(1+\frac{M_-(rz)+1}{M_-(rz)-1}\right)+\left(1-\frac{M_-(rz)+1}{M_-(rz)-1}\right)F_+(rz)}\right\vert\leq C_3(1-r)^{\beta-1}.
\]
Now if we simplify the expression on the left, we have 
\[ 
\left\vert \frac{1-M_-(rz)F_+(rz)}{F_+(rz)-M_-(rz)}\right\vert.
\]

A table on page 181 of \cite{GZ} computes the values of $u_\pm(1)$, $u_{\pm}(0), v_\pm(1)$, and $v_{\pm}(0)$. We use $k_0=-1$ in that table. Note that we index our solutions $u,v$ differently, so in their notation, 
\begin{align*}
u_+(n)=&q_+(n-1)+F_+p_+(n-1),\\
u_-(n)=&q_+(n-1)+M_-p_+(n-1),\\
v_+(n)=&s_+(n-1)+F_+r_+(n-1),\\
v_-(n)=&s_+(n-1)+M_-r_+(n-1).
\end{align*}
We also label our Verblunsky coefficients differently than they do: their $\alpha_n$ is written as $-\overline{\alpha(n)}$ in our notation. Using these calculations and (\ref{GZ}), we can write $G_{00}+G_{11}$ as

\[-\frac{(-1+F_+)(1+M_-)}{2(F_+-M_-)}-\frac{[z+\overline{\alpha(0)}+M_-(z-\overline{\alpha(0)})][-1-\alpha(0)z+F_+(1-\alpha(0)z)]}{2\rho(0)^2z(F_+-M_-)}.
\]

For $r$ approaching $1$, $\vert G_{00}(rz)+G_{11}(rz)\vert$ gets large when $F_+(rz)-M_-(rz)$ is close to zero, or when $F_+(rz)$ and $F_-(rz)$ both go to infinity. In both these cases, 

\[ 
\vert G_{00}(rz)+G_{11}(rz)\vert\leq C_4\left\vert \frac{1-M_-(rz)F_+(rz)}{F_+(rz)-M_-(rz)}\right\vert,
\]
for an appropriate constant $C_4$.
It is not difficult to see then, as a consequence
\[\left\vert G_{00}(rz)+G_{11}(rz)\right\vert\leq C_4(1-r)^{\beta-1}.
\]
Let us first note the connection between $G_{00}+G_{11}$ and the Carath\'eodory function $F$ corresponding to $\mathcal E$ and $d\Lambda$. We have by definition
\[F(z)=\int\frac{e^{i\theta}+z}{e^{i\theta}-z} d\Lambda(\theta).\]
Let us also define 
\[d\Lambda_r(\theta)=\mathrm{Re} F(re^{i\theta})\frac{d\theta}{2\pi}.\]
It is well known that $d\Lambda_r$ converges to $d\Lambda$ weakly. We note also that

\begin{align*}
F(z)=&\int \frac{e^{i\theta}+z}{e^{i\theta}-z} d\Lambda(\theta)\\
=&1+2z\int \frac{1}{e^{i\theta}-z}d\Lambda(\theta)\\
=&1+2z(G_{00}(z)+G_{11}(z)).
\end{align*}
We then deduce that $\Lambda(z)$ is uniformly $\beta$-H\"older continuous on $\Sigma$. Writing $z=e^{i\Theta}:$
$$\Lambda[e^{i(\Theta-\epsilon)}, e^{i(\Theta+\epsilon)}] = \int _{\Theta-\epsilon} ^{\Theta+\epsilon} 1 d\Lambda(\Theta). $$

We note that for sufficiently small $\epsilon$ the above is less than

$$2\epsilon \left(\mathrm{Re} F((1-\epsilon)z)+1\right) \le C \epsilon ^\beta,$$
since $\beta\leq1$.
\end{proof}

Note that if we let $C_1, C_2$ be $z$-dependent, the theorem still holds, except that $C_3$ is also $z$-dependent. We can then conclude:

\begin{thm}\label{hcont}
Let $\Sigma $ be a Borel subset of $\partial \D$, and let $\mathcal C$ be a CMV operator on $\ell^2(\mathbb N)$. Suppose there are constants $\gamma_1,\gamma_2$ such that for each $z\in\Sigma$, every normalized solution of $\eta(z)$ of the transfer matrix recursion (\ref{e.tmbasic}) obeys the estimate
\[ C_1(z)L^{\gamma_1}\leq \norm \eta\norm _L \leq C_2(z)L^{\gamma_2}\]
for $L>0$ sufficiently large. Let $\beta=2\gamma_1/(\gamma_1+\gamma_2)$. Then any extension $\mathcal E$ of $\mathcal{C}$ to $\ell^2(\Z)$ has purely $\beta$-continuous spectrum on $\Sigma$. Moreover, if $C_1(z)$  and $C_2(z)$ are independent of $z$, then for any $\varphi\in \ell^2$ of compact support, the spectral measure of $(\mathcal E,\varphi)$ is uniformly $\beta$-H\" older continuous on $\Sigma$.
\end{thm}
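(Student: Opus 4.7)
The proof plan has two parts, corresponding to the two claims in the theorem. For the first claim (pure $\beta$-continuous spectrum of $\mathcal E$ on $\Sigma$), the plan is to combine the previous theorem's control of $\Lambda = \mu_{\delta_0} + \mu_{\delta_1}$ with the cyclicity of $\{\delta_0, \delta_1\}$ for $\mathcal E$ in $\ell^2(\Z)$. Cyclicity follows from the pentadiagonal structure of (\ref{CMVmatrix-2}): for any extension $\mathcal E$ with all $|\alpha(n)| < 1$, the off-diagonal factors $\rho(n)\rho(n \pm 1)$ are nonzero, so the orbit of $\{\delta_0, \delta_1\}$ under $\mathcal E$ and $\mathcal E^{-1}$ spans $\ell^2(\Z)$. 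Consequently, if $A \subseteq \Sigma$ is $h^\beta$-null, then by the previous theorem $\Lambda(A) = 0$, so $E(A)\delta_0 = E(A)\delta_1 = 0$; by cyclicity $E(A) = 0$ on all of $\ell^2(\Z)$, and hence $\mu_\varphi(A) = 0$ for every $\varphi \in \ell^2(\Z)$.

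For the second claim (uniform $\beta$-H\"older continuity of $\mu_\varphi$ for compactly supported $\varphi$ under $z$-uniform $C_1, C_2$), the plan is a shift argument. For each $k \in \Z$ define the shifted extended CMV matrix $\mathcal E^{(k)}$ by $\alpha^{(k)}(n) = \alpha(n+k)$, together with the shifted half-line CMV $\mathcal C^{(k)}$ having Verblunsky coefficients $\alpha(n+k)$ for $n \geq 0$; then $\mathcal E^{(k)}$ is unitarily equivalent to $\mathcal E$ by translation (composed with CMV-dualization when $k$ is odd to preserve parity), and $\mu_{\delta_0}^{\mathcal E^{(k)}} = \mu_{\delta_k}^{\mathcal E}$. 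The power-law hypothesis on $\mathcal C$ propagates to one on $\mathcal C^{(k)}$ with the same exponents $\gamma_1, \gamma_2$ and with $z$-uniform constants, via the invertible transfer matrices whose $|k|$-step norms are finite for fixed $k$ and a fixed extension: a normalized solution of the shifted recursion corresponds to a rescaled solution of the original, and the rescaling factor is bounded uniformly in $z$. Applying the previous theorem to $\mathcal E^{(k)}$ then yields uniform $\beta$-H\"older continuity of $\Lambda^{(k)} = \mu_{\delta_k} + \mu_{\delta_{k+1}}$, hence of each $\mu_{\delta_k}$. Finally, for $\varphi = \sum_{k \in K} c_k \delta_k$ of compact support $K$, the Cauchy--Schwarz-type bound $\mu_\varphi(A) = \|E(A)\varphi\|^2 \leq |K| \sum_{k \in K} |c_k|^2 \mu_{\delta_k}(A)$ yields uniform $\beta$-H\"older continuity of $\mu_\varphi$ on $\Sigma$, with constant depending on $\varphi$ but $z$-independent.

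The main obstacle is the shift argument for negative $k$, where $\mathcal C^{(k)}$ depends on negative-index Verblunsky coefficients supplied by the chosen extension $\mathcal E$; for any fixed extension these coefficients lie in $\mathbb D$, so the $|k|$-step transfer matrix norms are finite and the shifted hypothesis holds with $k$-dependent but $z$-uniform constants, which suffices because $K$ is finite. A secondary subtlety is the odd-$k$ case, which swaps the even and odd CMV parity and requires invoking the previous theorem on the transposed matrix $\mathcal E^T$ rather than $\mathcal E$; since the Jitomirskaya--Last inequality (Lemma \ref{JLI}) and the reduction of $G_{00}+G_{11}$ to the M\"obius expression $|1 - M_- F_+|/|F_+ - M_-|$ are symmetric under this transposition, the same uniform bound carries through.
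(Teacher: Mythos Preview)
Your approach to the first claim (cyclicity of $\{\delta_0,\delta_1\}$ combined with the previous theorem) is exactly the paper's. For the second claim the paper takes a shorter route: from the proof of the cyclicity lemma one extracts the algebraic fact that any $\phi$ supported on $\{-N,\dots,N+1\}$ can be written as $P_0(\mathcal E)\delta_0 + P_1(\mathcal E)\delta_1$ with Laurent polynomials of degree at most $N$, so that $d\mu_\phi \le q\, d\Lambda$ for a function $q$ bounded on $\partial\D$; uniform $\beta$-H\"older continuity of $\Lambda$ then transfers directly to $\mu_\phi$ after a single application of the previous theorem. Your shift argument is also correct but does more work than needed: in particular the odd-$k$ parity/transpose issue can be bypassed entirely, since even shifts by $2m$ already give uniform H\"older control of $\Lambda^{(2m)}=\mu_{\delta_{2m}}+\mu_{\delta_{2m+1}}$, which dominates both $\mu_{\delta_{2m}}$ and $\mu_{\delta_{2m+1}}$. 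What the paper's route buys is brevity and no need to re-verify the hypothesis for shifted half-line operators; what yours buys is an explicit H\"older constant for each $\mu_{\delta_k}$ coming from transfer-matrix norms rather than polynomial degree, which could be useful if one wanted to track how the constant grows with the support of $\varphi$.
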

Before we proceed, we state and prove the following well-known fact for the reader's convenience:
\begin{lemma}
For any $n \in \Z$, $\{\delta_{2n},\delta_{2n+1}\}$ form a spectral basis for $\mathcal E$.
\end{lemma}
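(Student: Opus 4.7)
The plan is to show that the smallest closed subspace $V\subset\ell^2(\Z)$ which contains $\delta_{2n},\delta_{2n+1}$ and is invariant under both $\mathcal{E}$ and $\mathcal{E}^*$ coincides with $\ell^2(\Z)$; equivalently, the cyclic subspaces of these two vectors together span everything. By shifting the Verblunsky sequence $\alpha(\cdot)\mapsto\alpha(\cdot+2n)$, which amounts to conjugating $\mathcal{E}$ by an even shift on $\ell^2(\Z)$, I may reduce to the case $n=0$, and I write $W=\mathrm{span}(\delta_0,\delta_1)$.

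First I would invoke the $\mathcal{LM}$-factorization $\mathcal{E}=\mathcal{L}\mathcal{M}$ (as in \cite{GZ} or \cite{Simon}), where $\mathcal{L}$ and $\mathcal{M}$ are block-diagonal unitaries built out of the Szeg\H o blocks
\[
\Theta(\alpha)=\begin{pmatrix}\overline\alpha&\rho\\ \rho&-\alpha\end{pmatrix},\qquad \rho=\sqrt{1-|\alpha|^2},
\]
with $\mathcal{L}$ acting on each pair $\{\delta_{2k},\delta_{2k+1}\}$ via $\Theta(\alpha(2k))$ and $\mathcal{M}$ on each pair $\{\delta_{2k-1},\delta_{2k}\}$ via $\Theta(\alpha(2k-1))$. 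In particular $W$ is preserved by $\mathcal{L}^{\pm1}$, so every one of $\mathcal{E}^{\pm1}\delta_0,\mathcal{E}^{\pm1}\delta_1$ lies in $W\oplus\mathrm{span}(\delta_{-2},\delta_{-1},\delta_2,\delta_3)$.

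The computational heart is to reduce these four vectors modulo $W$ by reading off the appropriate columns of the $\Theta$-blocks. Direct calculation yields that $\mathcal{E}\delta_0$ becomes a nonzero multiple of $\rho(-2)\delta_{-2}-\alpha(-2)\delta_{-1}$, that $\mathcal{E}\delta_1$ becomes a nonzero multiple of $\overline{\alpha(2)}\delta_2+\rho(2)\delta_3$, and that both $\mathcal{E}^*\delta_0$ and $\mathcal{E}^*\delta_1$ fall into $\mathrm{span}(\delta_{-1},\delta_2)$. The decisive step is that the $2\times 2$ determinant of these latter two, expressed in the basis $\{\delta_{-1},\delta_2\}$, simplifies to
\[
-\rho(-1)\rho(1)\bigl(\rho(0)^2+|\alpha(0)|^2\bigr)=-\rho(-1)\rho(1),
\]
which is nonzero because every $|\alpha(k)|<1$. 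Consequently $\delta_{-1},\delta_2\in V$, and feeding this back into the two $\mathcal{E}$-relations forces $\delta_{-2}$ and $\delta_3$ into $V$ as well (since $\rho(-2),\rho(2)\neq 0$).

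The proof then concludes by induction: having shown that $\{\delta_{2m},\delta_{2m+1}\}\subset V$ implies $\{\delta_{2m-2},\delta_{2m-1}\}\cup\{\delta_{2m+2},\delta_{2m+3}\}\subset V$, I iterate outward in both directions to reach every $\delta_k$, so $V=\ell^2(\Z)$. The main obstacle is notational discipline: keeping track of which $\Theta$-block governs which basis pair under $\mathcal{L}$ versus $\mathcal{M}$, so that the critical $2\times 2$ determinant collapses cleanly via the unitarity identity $\rho(0)^2+|\alpha(0)|^2=1$.
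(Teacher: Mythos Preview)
Your argument is correct and follows the same underlying strategy as the paper: show by a direct linear-algebra computation with $\mathcal{E}^{\pm1}$ that the cyclic subspace generated by $\{\delta_{2n},\delta_{2n+1}\}$ contains the four neighboring basis vectors $\delta_{2n-2},\delta_{2n-1},\delta_{2n+2},\delta_{2n+3}$, then induct outward.

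The packaging is a bit different. You organize the computation via the $\mathcal{LM}$ factorization and extract $\delta_{-1},\delta_{2}$ simultaneously from the pair $\mathcal{E}^*\delta_0,\mathcal{E}^*\delta_1$ by a $2\times 2$ determinant that collapses to $-\rho(-1)\rho(1)$ through $\rho(0)^2+|\alpha(0)|^2=1$. The paper instead reads off the columns of $\mathcal{E}$ directly and uses an elimination trick: it forms a combination of $\mathcal{E}\delta_{2n+1}$ and $\mathcal{E}\delta_{2n+2}$ (the latter not yet known to lie in $S_{2n,2n+1}$) that lands in $\mathrm{span}(\delta_{2n},\delta_{2n+1})$, thereby forcing $\mathcal{E}\delta_{2n+2}$, and hence $\delta_{2n+2}$, into $S_{2n,2n+1}$; then a second elimination on the same two relations yields $\delta_{2n+3}$. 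Your determinant route is arguably cleaner and makes the role of unitarity more visible; the paper's elimination avoids invoking the factorization. Substantively they are the same proof.
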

\begin{proof}[Proof of lemma]
First, let us show that $\delta_{2n+2}$ is in the span $S_{2n,2n+1}$ of \[\{\mathcal E^k \delta_{2n}\}_{k\in\Z}\cup \{\mathcal E^k \delta_{2n+1}\}_{k\in\Z}.\]
First, note that we have 
\begin{align}
\label{E1}\mathcal E\delta_{2n+1}=&\overline{\alpha(2n+1)}\rho(2n)\delta_{2n}-\overline{\alpha(2n+1)}\alpha(2n)\delta_{2n+1}\\
&+\overline{\alpha(2n+2)}\rho(2n+1)\delta_{2n+2}+\rho(2n+2)\rho(2n+1)\delta_{2n+3}, \nonumber\\
\label{E2}\mathcal E\delta_{2n+2}=&\rho(2n+1)\rho(2n)\delta_{2n}-\rho(2n+1)\alpha(2n)\delta_{2n+1}\\
&-\overline{\alpha(2n+2)}\alpha(2n+1)\delta_{2n+2}-\rho(2n+2)\alpha(2n+1)\delta_{2n+3}.
\nonumber
\end{align}
This gives us
\begin{align*}
\frac{\alpha(2n+1)}{\rho(2n+1)}\mathcal E\delta_{2n+1}+\mathcal E \delta_{2n+2}=& \left(\frac{\vert \alpha(2n+1)\vert^2\rho(2n)}{\rho(2n+1)}+\rho(2n+1)\rho(2n)\right)\delta_{2n}\\
&-\left(\frac{\vert \alpha(2n+1)\vert^2\alpha(2n)}{\rho(2n+1)}+\rho(2n+1)\alpha(2n)\right)\delta_{2n+1},
\end{align*}
and we conclude that $S_{2n,2n+1}$ contains $\mathcal E\delta_{2n+2}$. Applying $\mathcal E^{-1}$ on both sides of the preceding equation shows that it also contains $\delta_{2n+2}$.

By considering the expressions for $\mathcal E\delta_{2n-1}$ and $\mathcal E\delta_{2n}$ instead, we can similarly show that $\mathcal E\delta_{2n-1}$, and hence $\delta_{2n-1}$ lies in $S_{2n,2n+1}$.

Now let us demonstrate that $\delta_{2n+3}$ is in $S_{2n,2n+1}$. We consider (\ref{E1}) and (\ref{E2}) once more, and this time by eliminating the $\delta_{2n},\delta_{2n+1}$ terms we get

This gives us
\begin{align*}
&\mathcal E\delta_{2n+1}-\frac{\overline{\alpha(2n+1)}}{\rho(2n+1)}\mathcal E \delta_{2n+2}\\
&= \left(\overline{\alpha(2n+2)}\rho(2n+1)+\frac{\vert \alpha(2n+1)\vert^2\alpha(2n+1)}{\rho(2n+1)}\right)\delta_{2n+2}\\
&+\left(\rho(2n+2)\rho(2n+1)+\frac{\vert \alpha(2n+1)\vert^2\rho(2n+2)}{\rho(2n+1)}\right)\delta_{2n+3},
\end{align*}

and this demonstrates that $\delta_{2n+3}$ lies in $S_{2n+1,2n+2}$, and hence $S_{2n,2n+1}$.

 We can similarly show that $\delta_{2n-2}$ lies in $S_{2n-1,2n}$ and hence $S_{2n,2n+1}$, by using the expressions for  $\mathcal E\delta_{2n-1}$ and $\mathcal E\delta_{2n}$ and then eliminating the $\delta_{2n}, \delta_{2n+1}$ terms.

We have now shown that $S_{2n,2n+1}$ contains $\{\delta_{2n-2}, \delta_{2n-1}, \delta_{2n+2},\delta_{2n+3}\}$. A simple induction argument now tells us that $S_{2n,2n+1}=\ell^2(\Z)$.
\end{proof}

\begin{paragraph}{\textbf{Remark}}
It is easy to see that $\{\delta_{2n-1},\delta_{2n}\}$ for any $n$ also form a spectral basis.
\end{paragraph}
\begin{proof}[Proof of Theorem \ref{hcont}]From the lemma and its proof, we see that given a $\phi\in \ell^2(\Z)$ with support on $\{-N,\ldots, N+1\}$, there must exist polynomials $P_0, P_1$ of degree not exceeding $N$ such that $P_0(\mathcal E)\delta_0+P_1(\mathcal E)\delta_1=\phi$. This implies that the spectral measure for $\phi$ is bounded by $q(z)d\Lambda(z)$ for some polynomially bounded function $q(z)$. If $C_1, C_2$ are independent of $z$, then, by the corollary $d\Lambda$ is uniformly $\beta-$ H\"older continuous, and this implies that $qd\Lambda$ is also uniformly $\beta$- H\"older continuous. In the case that $C_1, C_2$ depend on $z$, we know that $d\Lambda$ is $\beta$-continuous. Given any $\phi\in \ell^2$, its spectral measure is dominated by $\Lambda$ and so must be $\beta$-continuous as well.
\end{proof}
\end{section}

\begin{section}{Sturmian Verblunsky coefficients}

\begin{thm}\label{ubound-conds}
Given a sequence  $\{A_n:\T \longrightarrow \SL(2,\C)\}_{n=0} ^\infty$, let us write $M_k (z) = \prod_{n=k} ^0 A_n(z) $. Suppose there are sequences $a_n$ and $q_n$ of natural numbers related by  $q_{n+1} = a_{n+1} q_n + q_{n-1}$, such that $M_{q_{n+1}}(z) = M_{q_{n-1}}(z) M_{q_n}(z) ^{a_{n+1}}$. Let $x_n (z) = \mathrm{tr} M_{q_n} (z)$ and $z_n (z) = \mathrm{tr} M_{q_{n-1}}(z) M_{q_n} (z)$, and put $I(z) = x_{n-1} ^2 + x_n^2 + z_n ^2 - x_{n-1}x_n z_n$.

Suppose that:

\begin{enumerate}

\item The function $I$ is independent of $n$.

\item The sequence $a_n$ is of bounded density: $d = \limsup \frac{1}{N} \sum_{n=1} ^N a_n$ is finite.

\item There is a compact set $\Sigma \subset \T$ and a constant $K$ such that $z \in \Sigma$ iff $|x_n (z)| \le K$ or $|z_n(z)| \le K$ for all $n$.

\end{enumerate}
Then for all $z\in\Sigma$, there exist $\gamma_2(z)$ and $C(z)$ independent of $n$ such that
$$\norm M_n (z) \norm \le C(z) n ^{\gamma_2(z)}.$$
\end{thm}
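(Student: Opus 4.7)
The plan is to first prove polynomial growth of $\norm M_{q_n}(z)\norm$ along the Ostrowski subsequence by induction on $n$, and then extend to arbitrary $n$ via the Ostrowski numeration $n=\sum_k b_k q_k$ with $0\le b_k\le a_{k+1}$. The workhorse is the Cayley--Hamilton identity in $\SL(2,\C)$: for $M\in\SL(2,\C)$,
\[ M^k = S_{k-1}(\mathrm{tr}\,M)\,M - S_{k-2}(\mathrm{tr}\,M)\,I, \]
where $S_k$ is the Chebyshev-like polynomial characterized by $S_k(2\cos\theta)=\sin((k+1)\theta)/\sin\theta$. Applied to the substitution rule $M_{q_{n+1}} = M_{q_{n-1}}M_{q_n}^{a_{n+1}}$, this yields the key norm recursion
\[ \norm M_{q_{n+1}}\norm \le |S_{a_{n+1}-1}(x_n)|\,\norm M_{q_{n-1}}M_{q_n}\norm + |S_{a_{n+1}-2}(x_n)|\,\norm M_{q_{n-1}}\norm, \]
together with the trace recursion $x_{n+1} = S_{a_{n+1}-1}(x_n)\,z_n - S_{a_{n+1}-2}(x_n)\,x_{n-1}$ and its companion for $z_{n+1}$.

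Next I would use hypotheses (1) and (3) to show that, for every $z\in\Sigma$, both $|x_n(z)|$ and $|z_n(z)|$ are uniformly bounded in $n$ by some $\tilde K(z)$ depending only on $K$ and $I(z)$. Reading the invariant $I(z)=x_{n-1}^2+x_n^2+z_n^2-x_{n-1}x_nz_n$ as a quadratic in any single one of its three arguments with the other two as parameters, and applying hypothesis (3) at levels $n-1$ and $n$, one checks that in each of the four combinatorial cases at least two of the three variables lie in the bounded branch $\{|\cdot|\le K\}$; solving the resulting quadratic then controls the third and bootstraps to the uniform bound $|x_n|,|z_n|\le\tilde K(z)$.

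With the trace orbit controlled, $|S_k(x_n)|\le C(z)\,\lambda(z)^k$ for some $\lambda(z)\ge 1$ (with $|S_k|\le k+1$ in the elliptic regime $|x_n|\le 2$, where $\lambda=1$). Iterating the norm recursion and invoking hypothesis (2), the accumulated exponent $\sum_{k\le n}a_{k+1}$ is at most $(d+o(1))n$, so $\norm M_{q_n}(z)\norm \le C'(z)\,\lambda(z)^{(d+o(1))n}$; since $q_n\ge\rho^n$ for some $\rho>1$ (forced by the recursion $q_{n+1}=a_{n+1}q_n+q_{n-1}$), this converts to $\norm M_{q_n}(z)\norm \le C''(z)\,q_n^{\gamma_2(z)}$ with $\gamma_2(z)=d\log\lambda(z)/\log\rho$. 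For general $n$, the Ostrowski expansion combined with one more application of Cayley--Hamilton to each factor $M_{q_k}^{b_k}$ reduces $\norm M_n(z)\norm$ to a product of already-controlled polynomially bounded terms, yielding $\norm M_n(z)\norm \le C(z)\,n^{\gamma_2(z)}$. The main obstacle is Step~2: hypothesis (3) only forces \emph{one} of $x_n,z_n$ to be bounded at each level, so ruling out hyperbolic blow-up of $|S_k(x_n)|$ requires extracting quantitative information from the invariant $I$ and the compactness of $\Sigma$ jointly, rather than from either alone.
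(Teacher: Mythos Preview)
Your proposal follows exactly the route the paper intends: the paper's ``proof'' of this theorem is not a self-contained argument at all but a one-line citation to \cite{IRT}, together with the explicit constants $C(z)=L^{4d}$ and $\gamma_2(z)=4d\log_2 L$, where $L$ is defined in terms of $\sup_n|x_n|$, $\sup_n|z_n|$, and the norms of $M_0$, $M_1$, $M_0M_1$. The Cayley--Hamilton/Chebyshev recursion, the trace-orbit bound, the bounded-density estimate, and the Ostrowski interpolation you outline are precisely the ingredients of the Iochum--Raymond--Testard argument being invoked.

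One remark on your Step~2 concern: you are right that hypothesis~(3) as stated (with ``or'') only pins down one of $x_n,z_n$ at each level, yet the paper's formula for $L$ presupposes that \emph{both} $\sup_n|x_n|$ and $\sup_n|z_n|$ are finite on $\Sigma$. The paper does not address this gap either; in the intended Sturmian application the standard escape-lemma argument (if two consecutive traces exceed a threshold then the orbit diverges) together with the invariant $I$ does force both sequences to be bounded on $\Sigma$, and the quadratic-in-one-variable reading of $I$ you sketch is the right mechanism. So your identification of this as the delicate point is accurate, and the resolution you propose is the standard one --- the paper simply takes it for granted.
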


These conditions are sufficient to apply the argument in \cite{IRT}. One obtains
$$C(z) = L^{4d},\ \gamma_2(z) = 4d\log _2 L,$$
where

\begin{align*} 
L=&\max\left(4\max(2,\sup |x_n|, \sup |z_n|), 4 \norm M_1 \norm, 4 \norm M_0 \norm, 4 \norm M_0 M_1 \norm
\right) \\
&\times (4 + 2\max (2, \sup |x_n|, \sup |z_n|)).
\end{align*}
The method used in \cite{DKL} can be applied to show that for some $C'$,
$$\norm \xi \norm _L \le C'(z) n ^{\gamma(z)}$$
for any solution $\xi(z)$ to the transfer matrix recursion. Compactness of $\Sigma$ and continuity of $C$, $\gamma$ yield a $z$-independent bound by taking the maximum.

\begin{thm}\label{lbound-conds}

Let $q_n$ be the convergents of the continued fraction $[a_1, a_2, a_3, \dots]$. Relaxing the bounded density hypothesis to require only that $q_n$ be bounded above by a geometric sequence, such a sequence of maps into $\SL(2,\C)$ satisfies, for all $z\in\Sigma$,
$$\norm \xi(z) \norm_L \ge C_2(z)L^{\gamma_1(z)} $$
for some $C(z)$, $\gamma_1(z)$, and for $L$ large enough.

\end{thm}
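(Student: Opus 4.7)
The plan is to adapt the lower-bound portion of the Damanik--Killip--Lenz argument \cite{DKL}, transplanting it from the Schr\"odinger setting to the $\SL(2,\C)$-cocycle framework considered here. The structure mirrors that of Theorem \ref{ubound-conds}: first use the trace map and the invariance of $I$ to establish polynomial control of the transfer matrix norms $\norm M_n(z)\norm$, then convert this into a polynomial lower bound on $\norm \xi\norm_L$ via the $\SL(2,\C)$-inversion identity.

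The first step is to prove that for each $z\in\Sigma$ there exist $C(z)$ and $s(z)$ with $\norm M_n(z)\norm \le C(z)\,n^{s(z)}$ for all $n$ sufficiently large. The trace-map identity $M_{q_{n+1}}=M_{q_{n-1}}M_{q_n}^{a_{n+1}}$ together with the Chebyshev expansion $M_{q_n}^{a_{n+1}} = S_{a_{n+1}-1}(x_n)\,M_{q_n} - S_{a_{n+1}-2}(x_n)\,I$ (where $S_k$ are Chebyshev polynomials of the second kind, arising from Cayley--Hamilton in $\SL(2,\C)$) translates the spectrum-defining bounded-trace conditions $|x_n|\le K$ or $|z_n|\le K$ into a recursion for $\norm M_{q_n}(z)\norm$. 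The geometric growth of $q_n$ plays the role that bounded density plays in \cite{IRT}; at intermediate scales between $q_n$ and $q_{n+1}$, Zeckendorf decomposition combined with submultiplicativity propagates the bound to all $n$.

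The second step converts this upper bound into a solution-norm lower bound using the fact that $\norm M^{-1}\norm = \norm M\norm$ for any $M\in\SL(2,\C)$. Let $\Xi_n = (\xi(n),\xi(n-1))^T$ denote the 2-vector of consecutive solution values, so that $\Xi_n = M_n \Xi_0$; under the normalization $\norm \Xi_0\norm^2 = 2$, the inversion identity gives $\norm \Xi_n\norm \ge \sqrt{2}/\norm M_n(z)\norm$. Since $\sum_{n=1}^L \norm \Xi_n\norm^2 \le 2\norm \xi\norm_L^2$, summing yields
\[
\norm \xi\norm_L^2 \;\ge\; \sum_{n=1}^L \frac{1}{\norm M_n(z)\norm^2} \;\ge\; C_2(z)^2\,L^{1-2s(z)},
\]
so we may take $\gamma_1(z) = \tfrac{1}{2} - s(z)$, provided $s(z) < \tfrac{1}{2}$ on $\Sigma$.

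The main obstacle I anticipate is the upper-bound step under the weaker geometric-growth hypothesis on $q_n$: the original argument of \cite{IRT} uses bounded density crucially to control the exponent $s(z)$, and relaxing it requires careful bookkeeping in the trace-map iteration. The key observation is that geometric growth forces $n\asymp\log q_n$, so only $O(\log L)$ Fibonacci blocks appear in the Zeckendorf decomposition of any $L$, keeping the propagated polynomial exponent finite. Verifying that the resulting $s(z)$ is uniformly below $\tfrac{1}{2}$ on $\Sigma$ is the technically delicate point; once this is established, the computation above immediately yields the desired lower bound $\norm \xi(z)\norm_L \ge C_2(z)\,L^{\gamma_1(z)}$.
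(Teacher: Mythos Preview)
Your two-step outline contains a genuine gap at the crucial point. The pointwise estimate $\norm\Xi_n\norm\ge\sqrt{2}/\norm M_n(z)\norm$ is correct, but converting it into $\norm\xi\norm_L^2\gtrsim L^{1-2s(z)}$ is only non-trivial when $s(z)<\tfrac12$. You flag this as ``technically delicate,'' but in fact it is generically \emph{false}: the upper-bound exponent produced by the trace-map/Chebyshev/Zeckendorf argument you invoke in Step~1 is exactly the $\gamma_2$ of Theorem~\ref{ubound-conds}, namely $\gamma_2=4d\log_2 L$ with $L\ge 4\cdot 2\cdot(4+4)=64$ even in the most favourable Fibonacci case, so $s(z)\ge 24$. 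With $s(z)>\tfrac12$ your sum $\sum_{n\le L}\norm M_n\norm^{-2}$ converges and yields only the trivial bound $\norm\xi\norm_L\ge\text{const}$, i.e.\ $\gamma_1=0$. The explicit formula quoted after Claim~\ref{qpcmv-conds},
\[
\gamma_1=\frac{\log\bigl(1+\tfrac{1}{4C(\alpha,\beta)^2}\bigr)}{16\log B},
\]
which tends to $0$ as the trace bound $C$ grows, is incompatible with any expression of the form $\tfrac12-s$.

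The mechanism in \cite{DKL} is different from yours and does \emph{not} pass through a pointwise lower bound derived from $\norm M_n\norm^{-1}$. Instead one applies Cayley--Hamilton directly to the solution vectors at the hierarchical scales: from $M_{q_n}^2-x_nM_{q_n}+I=0$ one gets $\Xi_{2q_n}+\Xi_0=x_n\Xi_{q_n}$ (using the local periodicity of the Sturmian word over the relevant window), whence either $\norm\Xi_{q_n}\norm\ge\norm\Xi_0\norm/(2C)$ or $\norm\Xi_{2q_n}\norm\ge\norm\Xi_0\norm/2$. Iterating this ``mass reproduction'' over successive scales gives $\norm\xi\norm_{q_{n}}^2\ge(1+\tfrac{1}{4C^2})^{cn}\norm\xi\norm_{q_0}^2$ for an absolute constant $c$; the hypothesis $q_n\le B^n$ then converts the geometric growth in $n$ into polynomial growth in $L$, producing the exponent above. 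Your Step~1 (the polynomial upper bound on $\norm M_n\norm$) is not used at all in the lower-bound argument.
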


\begin{proof}

The method used in \cite{DKL} applies without any significant changes. It only deals with model-independent properties of the transfer matrices. 

\end{proof}

\begin{claim}
In both cases, an extended CMV matrix with Verblunsky coefficients that have a Fibonacci sequence as a suffix furnishes an example of such a sequence of maps. 
\end{claim}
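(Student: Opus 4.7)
The plan is to verify, for an extended CMV matrix whose Verblunsky coefficients have a Fibonacci suffix with alphabet $\{\alpha,\beta\} \subset \D$, each of the three hypotheses of Theorem~\ref{ubound-conds}; this will simultaneously cover the weaker geometric-growth hypothesis of Theorem~\ref{lbound-conds}, since the Fibonacci numbers $q_n$ grow like the $n$th power of the golden mean.

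The starting point is to take $a_n = 1$ for all $n \ge 1$, so that $q_n$ is the usual Fibonacci sequence satisfying $q_{n+1} = q_n + q_{n-1}$. Letting $A_n$ be the Szeg\H o transfer matrices from \eqref{e.tmbasic1}, the self-similarity of the Fibonacci word---that the prefix of length $q_{n+1}$ is the concatenation of the prefixes of lengths $q_{n-1}$ and $q_n$---yields the substitution identity
\[
M_{q_{n+1}}(z) = M_{q_{n-1}}(z)\, M_{q_n}(z).
\]
Care must be taken to align the product convention $M_k = A_k A_{k-1} \cdots A_0$ with the correct orientation of the Fibonacci suffix, but this is a bookkeeping exercise that fixes which endpoint of the suffix one reads from.

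To establish invariance of $I(z) = x_{n-1}^2 + x_n^2 + z_n^2 - x_{n-1} x_n z_n$, I would use two standard $\SL(2,\C)$ trace identities. First, from the substitution identity, $x_{n+1} = \mathrm{tr}(M_{q_{n-1}} M_{q_n}) = z_n$. Second, applying $\mathrm{tr}(UV) + \mathrm{tr}(UV^{-1}) = \mathrm{tr}(U)\mathrm{tr}(V)$ with $U = M_{q_n} M_{q_{n-1}}$ and $V = M_{q_n}$ gives
\[
z_{n+1} = \mathrm{tr}(M_{q_n} M_{q_{n-1}} M_{q_n}) = x_n z_n - x_{n-1}.
\]
Substituting these two recursions into $I_{n+1}$ and expanding shows $I_{n+1} = I_n$. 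This is the CMV incarnation of the Fricke--Vogt trace invariant familiar from the Fibonacci Hamiltonian. Hypothesis (2) is immediate since $a_n \equiv 1$ gives $d = 1$, and the geometric growth of $q_n$ required by Theorem~\ref{lbound-conds} follows from Binet's formula.

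The main obstacle will be hypothesis (3): producing a compact $\Sigma \subset \partial \D$ and a constant $K$ so that $z \in \Sigma$ iff $|x_n(z)| \le K$ or $|z_n(z)| \le K$ for all $n$. The analogous characterization for the Fibonacci Schr\"odinger operator is due to S\"ut\H o and hinges on a dichotomy for orbits of the trace map: using the invariance of $I$ one shows that if both $|x_n(z)|$ and $|z_n(z)|$ eventually exceed $K$ for some threshold $K$, the orbit escapes to infinity doubly exponentially, forcing exponential growth of $\norm M_n(z)\norm$ and hence $z \notin \sigma(\mathcal{E})$. I would port this dichotomy to the CMV setting by citing or adapting the analysis in the companion paper~\cite{DMY}, which develops precisely this description of the spectrum for Fibonacci CMV matrices. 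Once that is in hand, all three hypotheses are in place and the claim is established.
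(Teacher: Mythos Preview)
Your argument has a genuine gap at the outset: the Szeg\H o transfer matrices $A_n$ from \eqref{e.tmbasic1} are \emph{not} in $\SL(2,\C)$. A direct computation gives $\det A_n(z) = z$, so the product $T_k(z) = A_k \cdots A_0$ has $\det T_k(z) = z^{k+1}$. Since the hypotheses of Theorems~\ref{ubound-conds} and~\ref{lbound-conds} require a sequence of maps into $\SL(2,\C)$, you cannot take $M_k = T_k$ directly. More concretely, the trace identity you invoke, $\mathrm{tr}(UV) + \mathrm{tr}(UV^{-1}) = \mathrm{tr}(U)\mathrm{tr}(V)$, requires $\det V = 1$; without that, your derivation of the recursion $z_{n+1} = x_n z_n - x_{n-1}$ and hence of the invariance of $I$ breaks down.

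The paper repairs exactly this point by setting $M_n(z) := T_n(z)/z^{n/2}$, which forces $\det M_n = 1$ while preserving the substitution rule (a scalar factor commutes with everything). It then observes that for $z \in \partial\D$ one has $\norm M_n(z) \norm = \norm T_n(z)\norm$, so the norm bounds transfer back to the original Szeg\H o cocycle. Once this normalization is in place, your outline is essentially correct and parallels the paper's: the substitution rule, the Fricke--Vogt invariant, and the bounded density $d=1$ all follow as you describe. For hypothesis~(3) the paper simply cites \cite{Simon}~12.8 for the trace bound on $\Sigma$, rather than re-deriving the S\"ut\H o dichotomy or invoking \cite{DMY}, but that is a difference of reference rather than of substance.
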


\begin{proof}\label{qpcmv-conds}

Let $\{T_n (z)\}_{n=0} ^\infty$ be the sequence of $n$-step transfer matrices corresponding to the quasiperiodic CMV operator $\mathcal{E}$. Then $\mathrm{det}(T_n(z)) = z^n$, so that $M_n(z) := T_n(z)/z^{n/2}$ is in $\SL(2,\C)$. It is well known (see \cite{Simon} 12.8) that the family $T_n$ obeys a substitution rule of the necessary type; so does the family $M_n$. Because the spectrum of $\mathcal{E}$ is contained in $\T$, $M_n$ and $T_n$ always have the same operator norms. Finally, that the traces $x_n(z)$ obey the required bound is proved in \cite{Simon} 12.8.

The method in \cite{DKL} provides a simple expression for $\gamma_1$.  Put $q_n \le B^n$, and let $C(\alpha, \beta) := \max\{\max_{|z| = 1} 2+\sqrt{8+I(z)}, \frac{4}{\sqrt{1-|\alpha|^2}\sqrt{1-|\beta|^2}}\}$. Then 
$$\gamma_1 = \frac{\log\left(1+\frac{1}{4C(\alpha,\beta)^2)}\right)}{16\log B}. $$
The constant $C(\alpha,\beta)$ occurs because it bounds $|x_n(z)|$ for $z \in \Sigma$. 

\end{proof}

\begin{cor}\label{holder}

With the notation and assumptions above, the spectral measure of such a CMV operator is uniformly $\beta$-H\"older continuous for $\beta = \frac{2\gamma_1}{\gamma_1 + \gamma_2}$.

\end{cor}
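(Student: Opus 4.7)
The plan is to assemble Corollary \ref{holder} directly from the three preceding results in this section together with Theorem \ref{hcont}. The logical flow is: Claim \ref{qpcmv-conds} verifies that a Fibonacci suffix of Verblunsky coefficients gives rise to a family of $\SL(2,\C)$ transfer matrices $M_n(z) = T_n(z)/z^{n/2}$ satisfying the substitution rule and trace-map hypotheses of Theorems \ref{ubound-conds} and \ref{lbound-conds}. Applying those two theorems then produces the two-sided power-law bound
\[
C_1(z) L^{\gamma_1} \leq \norm \eta(z) \norm_L \leq C_2(z) L^{\gamma_2}
\]
for every normalized solution $\eta$ of the transfer matrix recursion \eqref{e.tmbasic} at every $z \in \Sigma$. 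Feeding this bound into Theorem \ref{hcont} produces the $\beta$-continuity and, provided the constants are uniform in $z$, the uniform $\beta$-Hölder continuity claimed.

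The first step I would carry out is the verification of hypothesis (1) of Theorem \ref{ubound-conds} for the Fibonacci case: the Fricke-type invariant $I(z)=x_{n-1}^2+x_n^2+z_n^2-x_{n-1}x_nz_n$ must be $n$-independent. This is exactly the classical OPUC trace-map identity recorded in \cite{Simon} 12.8 and invoked in Claim \ref{qpcmv-conds}. Hypothesis (2), bounded density, is immediate for the Fibonacci sequence where all $a_n=1$, so $d=1$; hypothesis (3) with the explicit constant $C(\alpha,\beta)$ also comes from \cite{Simon} 12.8, bounding $|x_n(z)|$ and $|z_n(z)|$ uniformly on the spectrum $\Sigma$. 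The hypothesis of Theorem \ref{lbound-conds} is even weaker and is satisfied since the Fibonacci convergents $q_n$ grow geometrically with $B$ any number greater than the golden mean.

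The second step is to convert the transfer matrix norm bounds $\norm M_n(z)\norm \le C(z)n^{\gamma_2(z)}$ into the required solution norm bounds $\norm \eta \norm_L$. Since $\norm \eta \norm_L^2 = \sum_{j=0}^{\lfloor L \rfloor} |\eta(j)|^2$ and each $\eta(j)$ is the image of the normalized initial data $(\eta_0,\eta_1)^T$ under a partial product of transfer matrices, the upper bound follows by summing the estimates from Theorem \ref{ubound-conds}, and the lower bound is exactly the content of Theorem \ref{lbound-conds}. The explicit formulas for $L$, $C(z)$, $\gamma_1(z)$, and $\gamma_2(z)$ displayed after those theorems show that each depends on $z$ only through $\sup|x_n(z)|$, $\sup|z_n(z)|$, the alphabet $\alpha,\beta$, and the matrices $M_0, M_1$. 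All of these quantities are continuous in $z$ and uniformly bounded on the compact set $\Sigma \subset \T$.

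The third step is the uniformity: taking the maximum of $C_1(z), C_2(z), \gamma_2(z)$ and the minimum of $\gamma_1(z)$ over the compact set $\Sigma$ yields $z$-independent constants, so the second clause of Theorem \ref{hcont} applies to give uniform $\beta$-Hölder continuity of the spectral measure with $\beta = 2\gamma_1/(\gamma_1+\gamma_2)$. The main obstacle I anticipate is bookkeeping in this uniformity argument: one must check that the dependence of $\gamma_1(z)$ on $z$ through $C(\alpha,\beta)$ in Claim \ref{qpcmv-conds} really does collapse to something $z$-independent, and that the lower bound constants from \cite{DKL} also depend continuously on $z$ through quantities controlled on $\Sigma$. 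Once this is in hand, the conclusion is a direct citation of Theorem \ref{hcont}, and no additional analytic work is required.
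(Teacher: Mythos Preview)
Your proposal is correct and matches the paper's intended approach: the corollary is stated without proof because it is exactly the assembly of Claim~\ref{qpcmv-conds}, Theorems~\ref{ubound-conds} and~\ref{lbound-conds}, and Theorem~\ref{hcont} that you describe. Your only unnecessary worry is the $z$-dependence of $\gamma_1$: the formula for $C(\alpha,\beta)$ already contains a $\max_{|z|=1}$, so $\gamma_1$ is $z$-independent from the outset.
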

\end{section}
\bibliographystyle{alpha}   
\bibliography{mybib}
\end{document}